\hfill \footnotesize {\rm  M. Eshaghi Gordji, N. Ghobadipour
 and Choonkil Park} \hfill
\hfill \footnotesize {\rm Jordan $*$-homomorphisms on $C^*$-algebras}  \hfill$~$}
\begin{document}
\thispagestyle{empty}
 \setcounter{page}{1}
\begin{center}
{\Large\bf  Jordan $*$-homomorphisms on $C^*$-algebras }

\vskip.30in

{\small\bf  M. Eshaghi Gordji } \\[2mm]

{\footnotesize Department of Mathematics,
Semnan University,\\ P. O. Box 35195-363, Semnan, Iran\\
[-1mm] e-mail: {\tt madjid.eshaghi@gmail.com}}

{\small\bf  N. Ghobadipour } \\[2mm]

{\footnotesize Department of Mathematics,
Semnan University,\\ P. O. Box 35195-363, Semnan, Iran\\
[-1mm] e-mail: {\tt ghobadipour.n@gmail.com}}

{\bf  Choonkil Park} \\[2mm]

{\footnotesize Department of Mathematics,
Hanyang University,\\ Seoul 133-791, South Korea\\
[-1mm] e-mail: {\tt baak@hanyang.ac.kr}}
\end{center}
\vskip 5mm \noindent{\footnotesize{\bf Abstract.}
In this paper, we
investigate Jordan $*$-homomorphisms on $C^*$-algebras associated
with the following functional inequality
$\|f(\frac{b-a}{3})+f(\frac{a-3c}{3})+f(\frac{3a+3c-b}{3})\| \leq
\|f(a)\|.$ We moreover prove the superstability and the generalized
Hyers-Ulam stability of Jordan $*$-homomorphisms on
$C^*$-algebras associated with the following functional equation
$$f(\frac{b-a}{3})+f(\frac{a-3c}{3})+f(\frac{3a+3c-b}{3})=f(a).$$

 \vskip.10in
 \footnotetext { 2000 Mathematics Subject Classification: Primary
17C65; 39B82; 46L05; 47Jxx; 47B48; 39B72.}
 \footnotetext { Keywords: Jordan $*$-homomorphism, $C^*$-algebra,
 generalized Hyers-Ulam stability, functional equation and inequality. }

  \newtheorem{df}{Definition}[section]
  \newtheorem{rk}[df]{Remark}
   \newtheorem{lem}[df]{Lemma}
   \newtheorem{thm}[df]{Theorem}
   \newtheorem{pro}[df]{Proposition}
   \newtheorem{cor}[df]{Corollary}
   \newtheorem{ex}[df]{Example}

 \setcounter{section}{0}
 \numberwithin{equation}{section}

\vskip .2in

\begin{center}
\section{Introduction}
\end{center}

The stability of functional equations was first introduced
 by Ulam \cite{U} in 1940. More precisely, he proposed
the following problem: Given a group $G_1,$ a metric group $(G_2,d)$
and a positive number $\epsilon$, does there exist a $\delta>0$ such
that if a function $f:G_1\longrightarrow G_2$ satisfies the
inequality $d(f(xy),f(x)f(y))<\delta$ for all $x,y\in G1,$ then
there exists a homomorphism $T:G_1\to G_2$ such that $d(f(x),
T(x))<\epsilon$ for all $x\in G_1?$ As mentioned above, when this
problem has a solution, we say that the homomorphisms from $G_1$ to
$G_2$ are stable. In 1941, Hyers \cite{H} gave a partial solution
of Ulam's problem for the case of approximate additive mappings
under the assumption that $G_1$ and $G_2$ are Banach spaces. In
1978, Th. M. Rassias \cite{R2} generalized the theorem of Hyers by
considering the stability problem with unbounded Cauchy differences.
This phenomenon of stability that was introduced by Th. M. Rassias
\cite{R2} is called {\it generalized Hyers-Ulam stability} or {\it  Hyers-Ulam-Rassias stability}.

\begin{thm}\label{t1} Let $f:{E}\longrightarrow{E'}$ be a mapping from
 a norm vector space ${E}$
into a Banach space ${E'}$ subject to the inequality
$$\|f(x+y)-f(x)-f(y)\|\leq \epsilon (\|x\|^p+\|y\|^p) \eqno(1.1)$$
for all $x,y\in E,$ where $\epsilon$ and p are constants with
$\epsilon>0$ and $p<1.$ Then there exists a unique additive
mapping $T:{E}\longrightarrow{E'}$ such that
$$\|f(x)-T(x)\|\leq \frac{2\epsilon}{2-2^p}\|x\|^p \eqno(1.2)$$ for all $x\in E.$
If $p<0$ then inequality $(1.1)$ holds for all $x,y\neq 0$, and
$(1.2)$ for $x\neq 0.$ Also, if the function $t\mapsto f(tx)$ from
$\Bbb R$ into $E'$ is continuous for each fixed $x\in E,$ then T
is $\Bbb R$-linear.
\end{thm}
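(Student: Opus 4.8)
The plan is to use Hyers's direct method. First I would set $y=x$ in $(1.1)$ to get $\|f(2x)-2f(x)\|\leq 2\epsilon\|x\|^p$, that is,
$$\left\|\frac{f(2x)}{2}-f(x)\right\|\leq \epsilon\|x\|^p.$$
Replacing $x$ by $2^k x$ and dividing by $2^k$ converts this into $\left\|2^{-(k+1)}f(2^{k+1}x)-2^{-k}f(2^k x)\right\|\leq \epsilon\,2^{k(p-1)}\|x\|^p$. Summing this telescoping estimate over $k=0,\dots,n-1$ and using the triangle inequality yields
$$\left\|\frac{f(2^n x)}{2^n}-f(x)\right\|\leq \epsilon\|x\|^p\sum_{k=0}^{n-1}2^{k(p-1)}.$$
Because $p<1$ we have $2^{p-1}<1$, so this geometric series converges; applying the same estimate between indices $m$ and $n$ shows that $\{2^{-n}f(2^n x)\}$ is Cauchy in the Banach space $E'$, and I would define $T(x):=\lim_{n\to\infty}2^{-n}f(2^n x)$.

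To see that $T$ is additive, replace $(x,y)$ by $(2^n x,2^n y)$ in $(1.1)$ and divide by $2^n$; the right-hand side becomes $\epsilon\,2^{n(p-1)}(\|x\|^p+\|y\|^p)\to 0$, so passing to the limit gives $T(x+y)=T(x)+T(y)$. Letting $n\to\infty$ in the summed estimate and evaluating the series, $\sum_{k\ge 0}2^{k(p-1)}=(1-2^{p-1})^{-1}=2/(2-2^p)$, produces exactly the bound $(1.2)$. When $p<0$ the quantity $\|x\|^p$ only makes sense for $x\neq 0$, so the entire argument is carried out for $x\neq 0$, as the statement allows.

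For uniqueness, suppose $T'$ is another additive map satisfying $(1.2)$. Additivity gives $T(2^n x)=2^n T(x)$ and $T'(2^n x)=2^n T'(x)$, so
$$\|T(x)-T'(x)\|=2^{-n}\|T(2^n x)-T'(2^n x)\|\leq 2^{-n}\cdot\frac{4\epsilon}{2-2^p}\,2^{np}\|x\|^p=\frac{4\epsilon}{2-2^p}\,2^{n(p-1)}\|x\|^p\longrightarrow 0,$$
whence $T=T'$. Finally, assuming $t\mapsto f(tx)$ is continuous for each fixed $x$, additivity of $T$ makes $\phi(t):=T(tx)$ an additive function from $\R$ into $E'$, and the estimate $\|\phi(t)-f(tx)\|\leq \frac{2\epsilon}{2-2^p}|t|^p\|x\|^p$ together with the continuity (hence local boundedness) of $t\mapsto f(tx)$ shows that $\phi$ is bounded on a bounded interval. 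A standard fact—an additive map that is not $\R$-linear has graph dense in the plane, so cannot be bounded on any interval—then forces $\phi(t)=t\,T(x)$, giving $\R$-linearity of $T$.

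The genuinely delicate points are the convergence bookkeeping, where the hypothesis $p<1$ must be used precisely so that $2^{p-1}<1$ guarantees both convergence of the series and the Cauchy tail estimate, and the passage from continuity to $\R$-linearity, which hinges on excluding the pathological (everywhere discontinuous) additive functions via boundedness on an interval. The remaining steps are routine telescoping and limit exchanges.
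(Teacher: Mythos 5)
Your proof is correct and is precisely the classical direct (Hyers--Rassias) method: the paper itself states Theorem 1.1 without proof, citing Rassias's 1978 paper \cite{R2}, and the argument there is essentially the one you give. The convergence bookkeeping with the geometric series summing to $2/(2-2^p)$, the additivity of $T$ from the rescaled inequality, the uniqueness via $2^{-n}T(2^n x)$, and the reduction of $\mathbb{R}$-linearity to boundedness of an additive function on an interval (applied, for vector values, after composing with continuous linear functionals) are all in order.
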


During the last decades several stability problems of functional
equations have been investigated by many mathematicians. A large
list of references concerning the stability of functional equations
can be found in \cite{B}--\cite{R1}.

\begin{df}
Let $A,B$ be  two $C^*$-algebras. A $\Bbb C$-linear mapping $f:A
\to B $ is called a Jordan $*$-homomorphism if $$\left\{%
\begin{array}{ll}
    f(a^2)=f(a)^2, \\
    f(a^*)=f(a)^* \\
\end{array}%
\right.    $$ for all $a \in A.$
\end{df}

C. Park \cite{P1} introduced and investigated Jordan $*$-derivations on
$C^*$- algebras associated with the following functional
inequality $$\|f(a)+f(b)+kf(c)\|\leq \|kf(\frac{a+b}{k}+c)\|$$ for
some integer $k$ greater than $1$ and proved the  generalized
Hyers-Ulam stability of Jordan $*$-derivations on
$C^*$-algebras associated with the following functional equation
$$f(\frac{a+b}{k}+c)=\frac{f(a)+f(b)}{k}+f(c)$$ for some integer $k$
greater than $1$ (see also \cite{pp, K, Mos}).

In this paper, we investigate  Jordan $*$-homomorphisms on
$C^*$-algebras associated with the following functional inequality
$$\|f(\frac{b-a}{3})+f(\frac{a-3c}{3})+f(\frac{3a+3c-b}{3})\| \leq
\|f(a)\|.$$ We moreover prove the  generalized Hyers-Ulam
stability of Jordan $*$-homomorphisms on $C^*$-algebras associated
with the following functional equation
$$f(\frac{b-a}{3})+f(\frac{a-3c}{3})+f(\frac{3a+3c-b}{3})=f(a).$$

\vskip 5mm
%================================================================
\section{Jordan $*$-homomorphisms}
%\setcounter{equation}{0}
%================================================================

In this section, we investigate Jordan $*$-homomorphisms on $C^*$-
algebras. Throughout this section, assume that $A,B$ are two
$C^*$-algebras.

\begin{lem}\label{t2}
Let $f:A \to B$ be a mapping such that
$$\|f(\frac{b-a}{3})+f(\frac{a-3c}{3})+f(\frac{3a+3c-b}{3})\|_B \leq \|f(a)\|_B \eqno(2.1)$$
for all $a,b,c \in A.$ Then $f$ is additive.
\end{lem}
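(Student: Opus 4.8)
The plan is to extract an exact functional equation from the inequality (2.1) by a judicious choice of $a$, and then read off additivity. The first step is to substitute $a=b=c=0$ into (2.1), which gives $\|3f(0)\|_B \le \|f(0)\|_B$, i.e. $3\|f(0)\|_B \le \|f(0)\|_B$; since the norm is nonnegative this forces $f(0)=0$.

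The key structural observation is that the three arguments appearing on the left of (2.1) always sum to $a$: indeed $\frac{b-a}{3}+\frac{a-3c}{3}+\frac{3a+3c-b}{3}=a$. Thus (2.1) bounds $\|f(x)+f(y)+f(z)\|_B$ with $x+y+z=a$ by $\|f(a)\|_B$. The idea is to choose $a=0$, so that the right-hand side becomes $\|f(0)\|_B=0$ by the first step, collapsing the inequality into an identity. Concretely, putting $a=0$ in (2.1) yields $\|f(\frac{b}{3})+f(-c)+f(c-\frac{b}{3})\|_B\le 0$, hence $f(\frac{b}{3})+f(-c)+f(c-\frac{b}{3})=0$ for all $b,c\in A$. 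Writing $u=\frac{b}{3}$ and $v=-c$ (both of which range over all of $A$ as $b,c$ do), this says
$$f(u)+f(v)+f(-u-v)=0 \qquad (u,v\in A).$$

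From here additivity follows in two short steps. Setting $v=0$ in the displayed identity and using $f(0)=0$ gives $f(u)+f(-u)=0$, so $f$ is odd. Substituting $f(-u-v)=-f(u+v)$ back into the identity then gives $f(u)+f(v)-f(u+v)=0$, i.e. $f(u+v)=f(u)+f(v)$ for all $u,v\in A$, which is the desired additivity.

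I do not anticipate a genuine obstacle here; the only non-mechanical point is the realization that the specialization $a=0$ (after establishing $f(0)=0$) turns the one-sided inequality (2.1) into the exact additive-type equation $f(u)+f(v)+f(-u-v)=0$, from which oddness and additivity drop out immediately. All remaining manipulations are direct substitutions and renamings of the free variables.
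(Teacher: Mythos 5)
Your proof is correct and follows essentially the same route as the paper: set $a=b=c=0$ to force $f(0)=0$, then set $a=0$ to collapse (2.1) into the exact identity $f(\frac{b}{3})+f(-c)+f(c-\frac{b}{3})=0$, and rename variables to read off additivity. In fact your version is slightly cleaner, since you derive oddness directly from that identity (taking $v=0$) and omit the paper's intermediate computations of $f(-c)=-f(c)$, $f(2c)=2f(c)$ and $f(3c)=3f(c)$, which are not needed for the conclusion.
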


\begin{proof}
Letting $a=b=c=0$ in $(2.1),$ we get $$\|3f(0)\|_B \leq
\|f(0)\|_B.$$So $f(0)=0.$ Letting $a=b=0$ in $(2.1),$ we get
$$\|f(-c)+f(c)\|_B \leq \|f(0)\|_B=0$$ for all $c \in A.$
Hence $f(-c)=-f(c)$ for all $c \in A.$ Letting $a=0$ and $b=6c$ in
$(2.1),$ we get $$\|f(2c)-2f(c)\|_B \leq \|f(0)\|_B=0$$ for all $c
\in A.$ Hence $$f(2c)=2f(c)$$ for all $c \in A.$ Letting $a=0$ and
$b=9c$ in $(2.1),$ we get
$$\|f(3c)-f(c)-2f(c)\|_B \leq \|f(0)\|_B=0$$ for all $c \in
A.$ Hence $$f(3c)=3f(c)$$ for all $c \in A.$ Letting $a=0$ in
$(2.1),$ we get
$$\|f(\frac{b}{3})+f(-c)+f(c-\frac{b}{3})\|_B \leq
\|f(0)\|_B=0$$ for all $a,b,c \in A.$ So
$$f(\frac{b}{3})+f(-c)+f(c-\frac{b}{3})=0\eqno(2.2)$$ for all $a,b,c \in
A.$ Let $t_1=c-\frac{b}{3}$ and $t_2=\frac{b}{3}$ in $(2.2).$ Then
$$f(t_2)-f(t_1+t_2)+f(t_1)=0$$ for all $t_1,t_2 \in A$. This means that  $f$
is additive.
\end{proof}

Now we prove the superstability problem for Jordan
$*$-homomorphisms as follows.

\begin{thm}\label{t2}
Let $p<1$ and $\theta$ be nonnegative real numbers, and let $f:A
\to B$ be a mapping such that
$$\|f(\frac{b-a}{3})+f(\frac{a-3\mu c}{3})+\mu f(\frac{3a+3c-b}{3})\|_B \leq \|f(a)\|_B,\eqno(2.2)$$
$$\|f(a^2)-f(a)^2\|_B \leq \theta \|a\|^{2p},\eqno(2.3)$$
$$\|f(a^*)-f(a)^*\|_B \leq \theta \|a^*\|^{p}\eqno(2.4)$$
for all $\mu \in \Bbb T^1:=\{\lambda \in \Bbb C~~;|\lambda|=1\}$
and all $a,b,c \in A.$ Then the mapping $f:A \to B$ is a  Jordan
$*$-homomorphism.
\end{thm}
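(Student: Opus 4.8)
The plan is to proceed in three stages: first establish additivity and $\mathbb{C}$-linearity of $f$, then use the approximate multiplicativity $(2.3)$ to force $f(a^2)=f(a)^2$ exactly, and finally use $(2.4)$ to force $f(a^*)=f(a)^*$ exactly.

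For additivity, I would set $\mu=1$ in $(2.2)$, which reduces it precisely to inequality $(2.1)$ of the preceding Lemma; hence $f$ is additive, and in particular odd with $f(0)=0$. To upgrade additivity to $\mathbb{C}$-linearity, I would extract $\mathbb{T}^1$-homogeneity directly from $(2.2)$: putting $a=b=0$ gives $\|f(0)+f(-\mu c)+\mu f(c)\|_B\le\|f(0)\|_B=0$, and using $f(0)=0$ together with oddness yields $f(\mu c)=\mu f(c)$ for every $\mu\in\mathbb{T}^1$ and every $c\in A$. I would then combine additivity with $\mathbb{T}^1$-homogeneity by the standard averaging argument: any $z\in\mathbb{C}$ with $|z|\le 1$ can be written as $z=\tfrac12(u+v)$ with $u,v\in\mathbb{T}^1$ (since $2z$ lies in the disc of radius $2$), whence $f(zx)=\tfrac12\bigl(f(ux)+f(vx)\bigr)=\tfrac12(u+v)f(x)=zf(x)$; scaling by a large integer $n$ with $|\lambda/n|\le 1$ then gives $f(\lambda x)=\lambda f(x)$ for all $\lambda\in\mathbb{C}$. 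Thus $f$ is $\mathbb{C}$-linear.

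The multiplicative and involutive properties follow from the same power-trick. For $(2.3)$, I would replace $a$ by $na$ with $n\in\mathbb{N}$ and use $\mathbb{C}$-linearity: the left side becomes $\|f(n^2a^2)-(nf(a))^2\|_B=n^2\|f(a^2)-f(a)^2\|_B$, while the right side is $\theta\,n^{2p}\|a\|^{2p}$. Dividing by $n^2$ gives $\|f(a^2)-f(a)^2\|_B\le\theta\,n^{2p-2}\|a\|^{2p}$, and since $p<1$ the exponent $2p-2$ is negative, so letting $n\to\infty$ forces $f(a^2)=f(a)^2$. For $(2.4)$, the analogous substitution $a\mapsto na$ together with $\|a^*\|=\|a\|$ and $(nf(a))^*=nf(a)^*$ yields $n\|f(a^*)-f(a)^*\|_B\le\theta\,n^{p}\|a\|^{p}$, hence $\|f(a^*)-f(a)^*\|_B\le\theta\,n^{p-1}\|a\|^{p}\to 0$ as $n\to\infty$. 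Combining the three stages, $f$ is a $\mathbb{C}$-linear map satisfying $f(a^2)=f(a)^2$ and $f(a^*)=f(a)^*$ for all $a\in A$, which is by definition a Jordan $*$-homomorphism.

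I expect the only genuinely delicate point to be the passage from additivity plus $\mathbb{T}^1$-homogeneity to full $\mathbb{C}$-linearity, since additivity alone gives merely $\mathbb{Q}$-linearity and no continuity is assumed; the averaging identity $z=\tfrac12(u+v)$ on the unit disc is exactly what bridges this gap. The remaining steps are routine once linearity is in hand, as the superstability is driven entirely by the sub-unit growth exponent $p<1$ annihilating the error terms under dilation.
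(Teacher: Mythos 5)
Your proof is correct and follows essentially the same route as the paper: reduce to the additivity lemma by setting $\mu=1$, extract $\mathbb{T}^1$-homogeneity from the substitution $a=b=0$, and annihilate the error terms in $(2.3)$--$(2.4)$ by the dilation $a\mapsto na$ together with $p<1$. The only difference is that where the paper simply cites Theorem 2.1 of \cite{P} for the passage from additivity plus $\mathbb{T}^1$-homogeneity to $\mathbb{C}$-linearity, you supply a correct self-contained averaging argument (writing $z=\tfrac12(u+v)$ with $u,v\in\mathbb{T}^1$ and then rescaling by an integer), which is precisely the content of that cited result.
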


\begin{proof}
 Let $\mu=1$ in (2.2). By Lemma 2.1, the mapping $f:A \to B$ is
additive. Letting $a=b=0$ in $(2.2),$ we get
$$\|f(-\mu c)+\mu f(c)\|_B \leq \|f(0)\|_B=0$$ for all $c \in A$
and all $\mu \in \Bbb T^1.$ So $$-f(\mu c)+\mu f(c)=f(-\mu c)+\mu
f(c)=0$$ for all $c \in A$ and all $\mu \in \Bbb T^1.$ Hence $f(\mu
c)=\mu f(c)$ for all $c \in A$ and all $\mu \in \Bbb T^1.$ By
Theorem $2.1$ of \cite{P}, the mapping $f:A \to B$ is $\Bbb C$-linear.
It follows from $(2.3)$ that
\begin{align*}
\|f(a^2)-f(a)^2\|_B&=\|\frac{1}{n^2}f(n^2a^2)-(\frac{1}{n}f(na))^2\|_B\\
&=\frac{1}{n^2}
\|f(n^2a^2)-f(na)^2\|_B\\
&\leq \frac{\theta}{n^2} {n^{2p}}\|a\|^{2p}
\end{align*}
for all $a \in A.$ Thus, since $p<1,$ by letting n tend to
$\infty$ in last inequality, we obtain $f(a^2)=f(a)^2$ for all $a
\in A.$ On the other hand, it follows from $(2.4)$ that
\begin{align*}
\|f(a^*)-f(a)^*\|_B&=\|\frac{1}{n}f(na^*)-(\frac{1}{n}f(na))^*\|_B\\
&=\frac{1}{n}
\|f(na^*)-f(na)^*\|_B\\
&\leq \frac{\theta}{n} {n^{p}}\|a^*\|^{p}
\end{align*}
 for all $a \in A.$ Thus, since $p<1,$ by letting n tend to
$\infty$ in last inequality, we obtain $f(a^*)=f(a)^*$ for all $a
\in A.$ Hence the mapping $f:A \to B$ is a  Jordan
$*$-homomorphism.
\end{proof}
\begin{thm}\label{t2}Let $p>1$ and $\theta$ be a nonnegative real
number, and let $f:A \to B$ be a mapping satisfying $(2.2),$
$(2.3)$ and $(2.4).$ Then the mapping $f:A \to B$ is a Jordan
$*$-homomorphism.
\end{thm}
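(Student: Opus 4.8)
The plan is to mirror the argument given for Theorem 2.2, exploiting the fact that the linearity part of that proof never uses the value of $p$, while the multiplicative and $*$-preserving parts must be adapted by scaling the variables \emph{down} instead of up. The only real change from the $p<1$ case is the direction of this scaling, which reverses the role of the exponent so that the error terms again vanish in the limit.

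First I would recover the linearity of $f$ exactly as before. Setting $\mu=1$ in $(2.2)$ reduces the inequality to $(2.1)$, so Lemma 2.1 shows that $f$ is additive. Then, letting $a=b=0$ in $(2.2)$, the same computation as in Theorem 2.2 gives $\|f(-\mu c)+\mu f(c)\|_B\le\|f(0)\|_B=0$, whence $f(\mu c)=\mu f(c)$ for all $\mu\in\mathbb{T}^1$ and all $c\in A$; by Theorem 2.1 of \cite{P} this upgrades additivity to $\mathbb{C}$-linearity. None of these steps is sensitive to the sign of $p-1$, so they transfer verbatim to the present hypotheses.

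The genuinely new point is the handling of $(2.3)$ and $(2.4)$. Since $f$ is $\mathbb{C}$-linear we have $f(a^2)=n^2 f\!\left(\tfrac{a^2}{n^2}\right)$ and $f(a)^2=n^2 f\!\left(\tfrac{a}{n}\right)^2$; applying $(2.3)$ with $a/n$ in place of $a$ and using $\left(\tfrac{a}{n}\right)^2=\tfrac{a^2}{n^2}$ gives
$$\|f(a^2)-f(a)^2\|_B = n^2\left\|f\left(\frac{a^2}{n^2}\right)-f\left(\frac{a}{n}\right)^2\right\|_B \leq n^2\cdot\frac{\theta}{n^{2p}}\|a\|^{2p}=\frac{\theta}{n^{2p-2}}\|a\|^{2p}.$$
Because $p>1$ the exponent $2p-2$ is strictly positive, so letting $n\to\infty$ forces $f(a^2)=f(a)^2$. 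An identical scaling, now using $f(a^*)=n f\!\left(\tfrac{a^*}{n}\right)$, $f(a)^*=n f\!\left(\tfrac{a}{n}\right)^*$ and $\left(\tfrac{a}{n}\right)^*=\tfrac{a^*}{n}$ together with $(2.4)$, yields
$$\|f(a^*)-f(a)^*\|_B = n\left\|f\left(\frac{a^*}{n}\right)-f\left(\frac{a}{n}\right)^*\right\|_B \leq \frac{\theta}{n^{p-1}}\|a^*\|^{p},$$
and $p>1$ again sends the bound to $0$, giving $f(a^*)=f(a)^*$.

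The one subtlety I would check most carefully is precisely this choice of scaling direction: in Theorem 2.2 one \emph{enlarges} the argument (replacing $a$ by $na$) so that the factor $n^{2p}/n^{2}$ tends to $0$ when $p<1$, whereas here one must \emph{shrink} it (replacing $a$ by $a/n$) so that the surviving factor $n^{2-2p}$ tends to $0$ when $p>1$; the analogous switch from $n^{p}/n$ to $n^{1-p}$ governs the $*$-estimate. Beyond this exponent bookkeeping the argument is the same routine computation already carried out in Theorem 2.2, so I anticipate no deeper obstacle, and the conclusion is that $f$ is a Jordan $*$-homomorphism.
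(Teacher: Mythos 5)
Your proof is correct and is exactly what the paper intends by its one-line remark that ``the proof is similar to the proof of Theorem 2.2'': the linearity part carries over unchanged, and replacing $a$ by $a/n$ instead of $na$ reverses the exponent so that $n^{2-2p}$ and $n^{1-p}$ tend to $0$ for $p>1$. No discrepancy with the paper's approach.
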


\begin{proof}
The proof is similar to the proof of Theorem 2.2.
\end{proof}

We prove the generalized Hyers-Ulam stability of Jordan
$*$-homomorphisms on $C^*$-algebras.

\begin{thm}\label{t2}
Suppose that  $f:A \to B$ is an odd  mapping  for which there exists
a function $\varphi: A \times A \times A \to \Bbb R^+$ such that
$$\sum_{i=0}^{\infty}3^i \varphi(\frac{a}{3^i},\frac{b}{3^i},
\frac{c}{3^i})< \infty, \eqno (2.5)$$
$$\lim_{n \to \infty}3^{2n}\varphi(\frac{a}{3^n},\frac{b}{3^n},\frac{c}{3^n})=0, \eqno (2.6)$$
$$\|f(a^*)-f(a)^*\|_B \leq \varphi(a,a,a), \eqno (2.7)$$
$$\|f(\frac{\mu b-a}{3})+f(\frac{a-3c}{3})+ \mu f(\frac{3a-b}{3}+c)-f(a)+f(c^2)-f(c)^2\|_B \leq \varphi(a,b,c) \eqno (2.8)$$
for all $a,b,c \in A$ and all $\mu \in \Bbb T^1.$ Then there
exists a unique  Jordan $*$-homomorphism $h:A \to B$ such that
$$\|h(a)-f(a)\|_B \leq \sum_{i=0}^{\infty}3^i
\varphi(\frac{a}{3^i},\frac{2a}{3^i},0) \eqno (2.9)$$ for all $a \in
A.$
\end{thm}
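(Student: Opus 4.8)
The plan is to use the direct Hyers--Ulam method, building $h$ as the limit of the rescaled maps $3^n f(a/3^n)$. First I would extract the governing estimate by specializing the approximate equation $(2.8)$: putting $\mu=1$, $b=2a$, $c=0$ and using that the odd map $f$ satisfies $f(0)=0$ collapses the left side to $\|3f(\frac a3)-f(a)\|_B$, so that $\|f(a)-3f(\frac a3)\|_B\le\varphi(a,2a,0)$. Replacing $a$ by $a/3^n$ and multiplying by $3^n$ shows the sequence $\{3^nf(a/3^n)\}$ is Cauchy, its consecutive differences being dominated by the summable terms of $(2.5)$; completeness of $B$ then gives a well-defined limit $h(a):=\lim_n 3^n f(a/3^n)$, and summing the telescoping bound yields $(2.9)$. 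Note $h$ is again odd with $h(0)=0$.

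Next I would verify that $h$ satisfies the exact functional equation underlying Lemma 2.1. Replacing $(a,b,c)$ by $(a/3^n,b/3^n,c/3^n)$ in $(2.8)$ with $\mu=1$ and multiplying by $3^n$, the four linear terms converge to $h(\frac{b-a}{3})+h(\frac{a-3c}{3})+h(\frac{3a+3c-b}{3})-h(a)$ (using $\frac{3a-b}{3}+c=\frac{3a+3c-b}{3}$), while the two Jordan terms $3^nf(c^2/3^{2n})$ and $3^nf(c/3^n)^2$ each carry a spare factor $3^{-n}$ against a convergent quantity and hence vanish; the right side tends to $0$, being the general term of the convergent series $(2.5)$. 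Thus $h$ satisfies the inequality $(2.1)$, and Lemma 2.1 gives additivity. For $\mathbb C$-linearity I would rerun the same limiting argument with $c=0$ but general $\mu$, obtaining $h(\frac{\mu b-a}{3})+h(\frac a3)+\mu h(\frac{3a-b}{3})=h(a)$, and then set $a=0$ to get $h(\mu b)=\mu h(b)$ for every $\mu\in\mathbb T^1$; with additivity this forces $\mathbb C$-linearity by Theorem 2.1 of \cite{P}.

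The multiplicative Jordan relation is where the second scaling hypothesis $(2.6)$ enters, and it is the main point to get right. Setting $a=b=0$, $\mu=1$ in $(2.8)$ and using oddness isolates $\|f(c^2)-f(c)^2\|_B\le\varphi(0,0,c)$; replacing $c$ by $c/3^n$ and multiplying by $3^{2n}$ gives $\|3^{2n}f(c^2/3^{2n})-(3^nf(c/3^n))^2\|_B\le 3^{2n}\varphi(0,0,c/3^n)$. Here the first term converges to $h(c^2)$ because it is the even subsequence $m=2n$ of the already convergent defining sequence $3^mf(c^2/3^m)$, the squared term converges to $h(c)^2$, and the right side vanishes by $(2.6)$; hence $h(c^2)=h(c)^2$. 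The $*$-property is analogous via $(2.7)$: rescaling $\|f(a^*)-f(a)^*\|_B\le\varphi(a,a,a)$ by $3^n$ and using that the involution is isometric yields $h(a^*)=h(a)^*$ in the limit, the error $3^n\varphi(a/3^n,a/3^n,a/3^n)$ tending to $0$ by $(2.5)$.

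Uniqueness is the standard tail estimate: if $h'$ is another such homomorphism satisfying $(2.9)$, then $\|h(a)-h'(a)\|_B=3^n\|h(a/3^n)-h'(a/3^n)\|_B\le 2\sum_{j\ge n}3^j\varphi(a/3^j,2a/3^j,0)\to 0$. The one genuinely delicate point throughout is matching scaling rates: the linear data converge under $3^n$-scaling while the Jordan data demand $3^{2n}$-scaling, so I must track carefully which rescaled terms survive and which acquire a vanishing power of $3$, and in particular justify that $3^{2n}f(c^2/3^{2n})$ really recovers $h(c^2)$ rather than some unrelated limit.
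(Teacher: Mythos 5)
Your proposal is correct and follows essentially the same route as the paper: the same specialization $\mu=1$, $b=2a$, $c=0$ to get $\|3f(\frac a3)-f(a)\|_B\le\varphi(a,2a,0)$, the same Cauchy/telescoping construction of $h$, the same $3^{2n}$-scaling with $(2.6)$ for $h(c^2)=h(c)^2$ and $3^n$-scaling with $(2.5)$ for $h(a^*)=h(a)^*$, and the same uniqueness tail estimate. The only (harmless) deviation is that you keep general $c$ when passing to the limit and then invoke Lemma 2.1 for additivity, whereas the paper sets $c=0$ first and derives additivity by direct substitutions in the resulting exact equation.
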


\begin{proof}
Letting $\mu=1,$~ $b=2a$ and $c=0$ in $(2.8),$ we get
$$\|3f(\frac{a}{3})-f(a)\|_B \leq \varphi(a,2a,0)$$ for all $a \in
A.$  Using the induction method, we have $$$$
$$\|3^nf(\frac{a}{3^n})-f(a)\|\leq \sum_{i=0}^{n-1}3^i
\varphi(\frac{a}{3^i},\frac{2a}{3^i},0) \eqno (2.10)$$ for all $a
\in A.$ In order to show the functions $h_n(a)=3^n f(\frac{a}{3^n})$
form a convergent sequence, we use the Cauchy convergence criterion.
Indeed, replace $a$ by $ \frac{a}{3^m} $ and multiply by $3^m$ in
$(2.10),$ where $m$ is an arbitrary positive integer. We find that
$$\|3^{m+n}f(\frac{a}{3^{m+n}})-3^m f(\frac{a}{3^m})\|\leq \sum_{i=m}^{m+n-1}3^i
\varphi(\frac{a}{3^i},\frac{2a}{3^i},0) \eqno (2.11)$$  for all
positive integers. Hence by the Cauchy criterion the limit
$h(a)=\lim_{n \to \infty}h_n(a)$ exists for each $a \in A.$ By
taking the limit as $n \to \infty$ in $(2.10)$ we see that
$$\|h(a)-f(a)\| \leq \sum_{i=0}^{\infty}3^i
\varphi(\frac{a}{3^i},\frac{2a}{3^i},0)$$ and $(2.9)$ holds for all
$a \in A.$ Let $\mu=1$ and $c=0$ in $(2.8),$ we get
$$\|f(\frac{b-a}{3})+f(\frac{a}{3})+f(\frac{3a-b}{3})-f(a)\|_B \leq \varphi(a,b,0)
\eqno(2.12)$$ for all $a,b,c \in A.$  Multiplying  both sides
$(2.12)$ by $3^n$ and Replacing $a,b$ by
$\frac{a}{3^n},\frac{b}{3^n},$  respectively, we get
$$\|3^nf(\frac{b-a}{3^{n+1}})+3^nf(\frac{a}{3^{n+1}})+3^nf(\frac{3a-b}{3^{n+1}})-3^nf(\frac{a}{3^n})\|_B \leq 3^n\varphi(\frac{a}{3^n},\frac{b}{3^n},0)
\eqno(2.13)$$ for all $a,b,c \in A.$ Taking the limit as $n \to
\infty,$ we obtain
$$h(\frac{b-a}{3})+h(\frac{a}{3})+h(\frac{3a-b}{3})-h(a)=0 \eqno(2.14)$$  for all $a,b,c \in A.$
Putting $b=2a$ in $(2.14),$ we get $3h(\frac{a}{3})=h(a)$ for all $a
\in A.$ Replacing $a$ by $2a$ in $(2.14),$ we get
$$h(b-2a)+h(6a-b)=2h(2a) \eqno(2.15)$$ for all $a,b \in A.$
Letting $b=2a$ in $(2.15),$ we get $h(4a)=2h(2a)$ for all $a \in A.$
So $h(2a)=2h(a)$ for all $a \in A.$ Letting $3a-b=s$ and $b-a=t$ in
$(2.14),$ we get
$$h(\frac{t}{3})+h(\frac{s+t}{6})+h(\frac{t}{3})=h(\frac{s+t}{2})$$ for all $s,t\in
A.$ Hence $h(s)+h(t)=h(s+t)$  for all $s,t\in A.$ So, $h$ is
additive. Letting $a=c=0$ in $(2.12)$ and using the above method, we
have $h(\mu b)=\mu h(b)$ for all $b \in A$ and all $\mu \in \Bbb T.$
Hence by the Theorem 2.1 of \cite{P}, the mapping $f:A \to B$
is $\Bbb C$-linear.\\
Now, let $h^{'}:A \to B$ be another $\Bbb C$-linear mapping
satisfying $(2.9).$ Then we have
\begin{align*}
\|h(a)-h^{'}(a)\|_B &=3^n
\|h(\frac{a}{3^n})-h^{'}(\frac{a}{3^n})\|_B \\
& \leq
3^n[\|h(\frac{a}{3^n})-f(\frac{a}{3^n})\|_B\\
&+\|h^{'}(\frac{a}{3^n})-f(\frac{a}{3^n})\|_B]\\
&\leq 2\sum_{i=n}^{\infty}3^i
\varphi(\frac{a}{3^i},\frac{2a}{3^i},0)=0
\end{align*}
for all $a \in A.$ Letting $\mu=1$ and $a=b=0$ in $(2.8),$ we get
$\|f(c^2)-f(c)^2\|_B \leq \varphi(0,0,c)$ for all $c \in A.$ So
$$\|h(c^2)-h(c)^2\|_B=\lim_{n \to \infty}3^{2n}\|f(\frac{c^2}{3^{2n}})-f(\frac{c}{3^n})^2\|_B \leq \lim_{n \to \infty}3^{2n}\varphi(0,0,\frac{c}{3^n})=0$$
for all $c \in A.$ Hence $h(c^2)=h(c)^2$ for all $c \in A.$ On the
other hand we have $$\|h(c^*)-h(c)^*\|_B=\lim_{n \to \infty}3^n
\|f(\frac{c^*}{3^n})-f(\frac{c}{3^n})^*\|_B \leq \lim_{n \to
\infty}3^n \varphi(\frac{c}{3^n},\frac{c}{3^n},\frac{c}{3^n})=0$$
for all $c \in A.$ Hence $h(c^*)=h(c)^*$ for all $c \in A.$ Hence
$h:A \to B$ is a unique Jordan $*$-homomorphism.
\end{proof}

\begin{cor}
Suppose that $f:A \to B$ is a mapping with $f(0)=0$ for which
there exists constant $\theta\geq0$ and $p_1,p_2,p_3>1$ such that
$$\|f(\frac{\mu b-a}{3})+f(\frac{a-3c}{3})+ \mu f(\frac{3a-b}{3}+c)-f(a)+f(c^2)-f(c)^2\|_B \leq \theta (\|a\|^{p_1}+\|b\|^{p_2}+\|c\|^{p_3}),$$
$$\|f(a^*)-f(a)^*\|_B \leq \theta (\|a\|^{p_1}+\|a\|^{p_2}+\|a\|^{p_3})$$
for all $a,b,c \in A$ and all $ \mu \in \Bbb T.$ Then there exists
a unique Jordan $*$-homomorphism $h:A \to B$ such that
$$\|f(a)-h(a)\|_B \leq \frac{\theta \|a\|^{p_1}}{1-3^{(1-p_1)}}+\frac{\theta 2^{p_2}\|a\|^{p_2}}{1-3^{(1-p_2)}}$$
for all $a \in A.$
\end{cor}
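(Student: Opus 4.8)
The plan is to derive this corollary as a direct specialization of Theorem 2.4, taking the control function to be $\varphi(a,b,c)=\theta(\|a\|^{p_1}+\|b\|^{p_2}+\|c\|^{p_3})$ for all $a,b,c\in A$. With this choice the two inequalities (2.7) and (2.8) in the hypotheses of Theorem 2.4 become, respectively, the two inequalities assumed in the corollary---note in particular that $\varphi(a,a,a)=\theta(\|a\|^{p_1}+\|a\|^{p_2}+\|a\|^{p_3})$---so those hypotheses hold with nothing to check. What remains is to verify the two growth conditions (2.5) and (2.6); once these are in place, Theorem 2.4 supplies a unique Jordan $*$-homomorphism $h$ obeying (2.9), and I would read off the explicit constant by evaluating the right-hand side of (2.9) for this $\varphi$.

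First I would check the summability condition (2.5). Using $\|a/3^i\|^{p_j}=3^{-ip_j}\|a\|^{p_j}$, the series splits into three geometric series of ratio $3^{1-p_j}$, each of which converges because $p_j>1$ gives $3^{1-p_j}<1$. Evaluating the same expression at the arguments appearing in (2.9), namely with $b$ replaced by $2a$ and $c$ by $0$, and using $\|2a\|^{p_2}=2^{p_2}\|a\|^{p_2}$, I obtain
$$\sum_{i=0}^{\infty}3^i\varphi\left(\frac{a}{3^i},\frac{2a}{3^i},0\right)=\frac{\theta\|a\|^{p_1}}{1-3^{1-p_1}}+\frac{\theta 2^{p_2}\|a\|^{p_2}}{1-3^{1-p_2}},$$
which is exactly the bound claimed in the corollary. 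Thus the summability condition and the explicit estimate are both settled by this one computation.

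The delicate step, and the one I expect to be the main obstacle, is condition (2.6). A direct calculation gives
$$3^{2n}\varphi\left(\frac{a}{3^n},\frac{b}{3^n},\frac{c}{3^n}\right)=\theta\left(3^{n(2-p_1)}\|a\|^{p_1}+3^{n(2-p_2)}\|b\|^{p_2}+3^{n(2-p_3)}\|c\|^{p_3}\right),$$
which tends to $0$ only when each exponent $2-p_j$ is negative, that is, when $p_j>2$. The factor $3^{2n}$ is precisely what the quadratic term $f(c^2)$ forces upon rescaling $c\mapsto c/3^n$ in the multiplicativity part of the proof of Theorem 2.4, so this restriction is intrinsic rather than an artifact of the estimate. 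Hence, as stated, the hypothesis $p_j>1$ appears too weak to guarantee (2.6); I would either strengthen it to $p_j>2$ (or at least $p_3>2$, since (2.6) is invoked only to produce $h(c^2)=h(c)^2$) or else exhibit a different control function. Granting (2.6), the conclusion is immediate: Theorem 2.4 yields the unique $\Bbb C$-linear Jordan $*$-homomorphism $h$ satisfying (2.9), with the constant computed above. One should also confirm the standing oddness assumption of Theorem 2.4, since here only $f(0)=0$ is posited, so a short argument deducing oddness is needed before the theorem applies.
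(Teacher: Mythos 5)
Your approach is exactly the paper's: the entire published proof of this corollary is the single sentence ``Letting $\varphi(a,b,c):=\theta(\|a\|^{p_1}+\|b\|^{p_2}+\|c\|^{p_3})$ in Theorem 2.4, we obtain the result,'' and your computation of $\sum_{i=0}^{\infty}3^i\varphi(a/3^i,2a/3^i,0)$ correctly reproduces the stated bound. The difference is that you actually checked the hypotheses of Theorem 2.4, and the two obstructions you found are genuine defects of the paper, not of your argument. Condition (2.6) requires $3^{2n}\varphi(0,0,c/3^n)=\theta\,3^{n(2-p_3)}\|c\|^{p_3}\to 0$, which fails for $1<p_3\le 2$; the paper silently ignores this, so the multiplicativity step $h(c^2)=h(c)^2$ is unjustified under the stated hypothesis $p_3>1$ and one must either assume $p_3>2$ (as you propose) or repair Theorem 2.4 itself. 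Likewise, Theorem 2.4 is stated for \emph{odd} mappings while the corollary assumes only $f(0)=0$, and the paper supplies no argument bridging that mismatch either. In short: your proposal is a faithful, more honest version of the paper's proof, and the gaps you flag are the paper's gaps.
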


\begin{proof}
Letting $\varphi(a,b,c):=\theta
(\|a\|^{p_1}+\|b\|^{p_2}+\|c\|^{p_3})$ in Theorem 2.4, we obtain
the result.
\end{proof}
The following corollary is the Isac-Rassias stability.\\
\begin{cor}
Let $\psi: \Bbb R^+\cup\{0\} \to \Bbb R^+\cup\{0\}$ be a function
with $\psi(0)=0$ such that
$$\lim _{t \to 0}\frac{\psi(t)}{t}=0,$$
$$\psi(st)\leq \psi(s)\psi(t)~~~~~~~~~~s,t \in \Bbb R^+,$$
$$3\psi(\frac{1}{3})<1.$$\\
Suppose that $f:A \to B$ is a mapping with $f(0)=0$  satisfying
$(2.7)$ and $(2.8)$ such that
$$\|f(\frac{\mu b-a}{3})+f(\frac{a-3c}{3})+ \mu f(\frac{3a-b}{3}+c)-f(a)+f(c^2)-f(c)^2\|_B \leq
\theta [\psi(\|a\|)+\psi(\|b\|)+\psi(\|c\|)]$$ for all $a,b,c \in
A$ where $\theta>0$ is a constant. Then  there exists a unique
Jordan $*$-homomorphism $h:A \to B$ such that
$$\|h(a)-f(a)\|_B \leq \frac{\theta(1+\psi(2))\psi(\|a\|)}{1-3\psi(\frac{1}{3})}$$ for all $a \in A.$
\end{cor}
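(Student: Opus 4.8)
The plan is to obtain the corollary as a direct specialization of Theorem 2.4. I would set $\varphi(a,b,c):=\theta[\psi(\|a\|)+\psi(\|b\|)+\psi(\|c\|)]$ for all $a,b,c\in A$. With this choice, hypotheses (2.7) and (2.8) of Theorem 2.4 are exactly the two inequalities assumed in the corollary (for (2.7) note $\varphi(a,a,a)=3\theta\psi(\|a\|)$), so no work is needed there; the whole proof reduces to checking the summability condition (2.5) and the decay condition (2.6) for this $\varphi$. Once these hold, Theorem 2.4 yields a unique Jordan $*$-homomorphism $h:A\to B$ satisfying (2.9), and the only remaining task is to rewrite the right-hand side of (2.9) in closed form.

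The engine for every estimate is the submultiplicativity $\psi(st)\le\psi(s)\psi(t)$, which on iteration gives $\psi(1/3^i)\le(\psi(1/3))^i$ and hence $\psi(\|a\|/3^i)\le(\psi(1/3))^i\psi(\|a\|)$. For (2.5) this bounds the $i$-th summand by $3^i\varphi(a/3^i,b/3^i,c/3^i)\le(3\psi(1/3))^i\,\theta[\psi(\|a\|)+\psi(\|b\|)+\psi(\|c\|)]$, and since $3\psi(1/3)<1$ the series is dominated by a convergent geometric series. The same computation evaluates the bound in (2.9): specializing to the arguments $(a,2a,0)$ and using $\psi(2\|a\|/3^i)\le\psi(2)\psi(\|a\|/3^i)$, the $i$-th term is at most $(3\psi(1/3))^i(1+\psi(2))\,\theta\psi(\|a\|)$, and summing the geometric series $\sum_{i\ge0}(3\psi(1/3))^i=1/(1-3\psi(1/3))$ produces exactly $\frac{\theta(1+\psi(2))\psi(\|a\|)}{1-3\psi(1/3)}$, the estimate claimed in the corollary.

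The step I expect to be the main obstacle is the decay condition (2.6), whose prefactor is $3^{2n}$ rather than $3^n$. The relevant term (taking $a=b=0$, which is all the proof of Theorem 2.4 actually uses) is $3^{2n}\psi(\|c\|/3^n)$; writing $t_n=\|c\|/3^n\to0$ it equals $\|c\|^2\,\psi(t_n)/t_n^2$, so what is needed is a second-order decay of $\psi$ at the origin rather than merely $\lim_{t\to0}\psi(t)/t=0$. The clean way to secure it is again submultiplicativity, which yields $3^{2n}\psi(\|c\|/3^n)\le(9\psi(1/3))^n\psi(\|c\|)$, a quantity that tends to $0$ once $9\psi(1/3)<1$. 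I would therefore treat the verification of (2.6) as the heart of the argument, checking carefully that the hypotheses on $\psi$ do force $3^{2n}\psi(\|c\|/3^n)\to0$ and strengthening the smallness condition on $\psi(1/3)$ if it proves necessary. Everything else is routine geometric-series manipulation, so with (2.6) in hand Theorem 2.4 applies directly and yields both the existence and uniqueness of $h$ and the refined bound at once.
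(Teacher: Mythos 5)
Your proposal follows exactly the route the paper takes: its entire proof of this corollary is the single line ``Letting $\varphi(a,b,c):=\theta[\psi(\|a\|)+\psi(\|b\|)+\psi(\|c\|)]$ in Theorem 2.4, we obtain the result.'' Your verification of (2.5) and your evaluation of the bound (2.9) via $\psi(\|a\|/3^i)\le(\psi(1/3))^i\psi(\|a\|)$, $\psi(2\|a\|/3^i)\le\psi(2)\psi(\|a\|/3^i)$ and the geometric series $\sum_{i\ge0}(3\psi(1/3))^i=(1-3\psi(1/3))^{-1}$ are correct and reproduce the stated estimate.

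The point you flag as the main obstacle is in fact a genuine defect of the corollary as printed, and you have diagnosed it correctly. Condition (2.6) requires $3^{2n}\varphi(a/3^n,b/3^n,c/3^n)\to0$, and the part actually used in the proof of Theorem 2.4 is $3^{2n}\varphi(0,0,c/3^n)=3^{2n}\theta\,\psi(\|c\|/3^n)\to0$, which controls the multiplicativity step $h(c^2)=h(c)^2$. Submultiplicativity only gives $3^{2n}\psi(\|c\|/3^n)\le(9\psi(1/3))^n\psi(\|c\|)$, which tends to zero when $9\psi(1/3)<1$ but not under the stated hypothesis $3\psi(1/3)<1$; and the first-order decay $\psi(t)/t\to0$ cannot rescue it. A concrete witness is $\psi(t)=t^{3/2}$: it satisfies all the hypotheses of the corollary ($\psi(st)=\psi(s)\psi(t)$, $3\psi(1/3)=3^{-1/2}<1$), yet $3^{2n}\psi(\|c\|/3^n)=\|c\|^{3/2}3^{n/2}\to\infty$. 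So either the hypothesis must be strengthened to $9\psi(1/3)<1$ (as you suggest), or the multiplicativity step must be argued differently; the paper simply does not check (2.6) and the corollary is false as stated for such $\psi$. One further small mismatch, present equally in the paper's own one-line proof and left unaddressed in yours: Theorem 2.4 assumes $f$ is odd, while the corollary assumes only $f(0)=0$, so strictly speaking an additional remark is needed before Theorem 2.4 can be invoked at all.
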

\begin{proof}
Letting $\varphi(a,b,c):=\theta
[\psi(\|a\|)+\psi(\|b\|)+\psi(\|c\|)]$ in Theorem 2.4, we obtain
the result.
\end{proof}

\begin{thm}
Suppose that $f:A \to B$ is a mapping with $f(0)=0$ for which there
exists a function $\varphi:A \times A \times A \to B$ satisfying
$(2.7),$~ $(2.8)$ and $(2.8)$ such that
$$\sum_{i=1}^{\infty}3^{-i}\varphi(3^ia,3^ib,3^ic)< \infty,  \eqno(2.16)$$
$$\lim_{n \to \infty}3^{-2n} \varphi(3^ia,3^ib,3^ic)=0  \eqno(2.17)$$
for all $a,b,c \in A.$  Then there exists a unique  Jordan
$*$-homomorphism $h:A \to B$ such that
$$\|h(a)-f(a)\|_B \leq \sum_{i=1}^{\infty}3^{-i}
\varphi(3^ia,3^i2a,0)  \eqno(2.18)$$ for all $a \in A.$
\end{thm}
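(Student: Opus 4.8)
The plan is to mirror the proof of Theorem 2.4, replacing the inward sequence $3^n f(a/3^n)$ by the outward sequence $3^{-n} f(3^n a)$, since the summability hypotheses (2.16) and (2.17) now run toward infinity rather than toward zero. First I would specialize (2.8) by setting $\mu=1$, $b=2a$, $c=0$; because $f(0)=0$ the quadratic terms vanish and the three function values collapse onto the single argument $a/3$, yielding $\|3f(a/3)-f(a)\|_B \le \varphi(a,2a,0)$. Replacing $a$ by $3a$ gives the one-step estimate $\|f(a)-3^{-1}f(3a)\|_B \le 3^{-1}\varphi(3a,6a,0)$, and a routine induction then produces
$$\|3^{-n}f(3^n a)-f(a)\|_B \le \sum_{i=1}^{n}3^{-i}\varphi(3^i a, 3^i 2a,0)$$
for all $n$ and all $a \in A$, whose right-hand side is exactly the partial sum appearing in (2.18).

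Next I would put $h_n(a)=3^{-n}f(3^n a)$ and invoke the Cauchy criterion just as in Theorem 2.4: replacing $a$ by $3^m a$ and multiplying through by $3^{-m}$ bounds $\|h_{m+n}(a)-h_m(a)\|_B$ by the tail $\sum_{j=m+1}^{m+n}3^{-j}\varphi(3^j a, 3^j 2a,0)$, which tends to $0$ by (2.16). Hence $h(a)=\lim_{n\to\infty}h_n(a)$ exists for each $a$, and passing to the limit in the displayed inequality yields (2.18). To get additivity I would take $\mu=1$, $c=0$ in (2.8), replace $(a,b)$ by $(3^n a, 3^n b)$, multiply by $3^{-n}$, and let $n\to\infty$; the error $3^{-n}\varphi(3^n a,3^n b,0)\to 0$ by (2.16), leaving $h(\tfrac{b-a}{3})+h(\tfrac{a}{3})+h(\tfrac{3a-b}{3})-h(a)=0$, the exact analogue of (2.14). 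From here the substitution $b=2a$ and the change of variables $s=3a-b$, $t=b-a$ force genuine additivity as before, while $a=c=0$ in (2.8) gives $h(\mu b)=\mu h(b)$ on $\mathbb{T}^1$; Theorem 2.1 of \cite{P} then upgrades this to $\mathbb{C}$-linearity. Uniqueness follows verbatim from the Theorem 2.4 computation, the relevant tail being $2\sum_{i=n+1}^{\infty}3^{-i}\varphi(3^i a,3^i 2a,0)\to 0$.

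It remains to verify the two algebraic identities. For the $*$-property I would apply (2.7) at $3^n c$, giving $\|h(c^*)-h(c)^*\|_B=\lim_{n}3^{-n}\|f(3^n c^*)-f(3^n c)^*\|_B \le \lim_{n}3^{-n}\varphi(3^n c,3^n c,3^n c)=0$, the limit vanishing because the general term of the convergent series in (2.16) tends to zero.

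The delicate step is the Jordan identity $h(c^2)=h(c)^2$, since the two sides naturally carry different powers of $3$. Setting $\mu=1$, $a=b=0$ in (2.8) gives $\|f(c^2)-f(c)^2\|_B \le \varphi(0,0,c)$. The trick is to evaluate $h(c^2)$ along the subsequence indexed by $2n$, so that $h(c^2)=\lim_{n}3^{-2n}f(3^{2n}c^2)$, while continuity of multiplication in $B$ gives $h(c)^2=\lim_{n}3^{-2n}f(3^n c)^2$. Applying the above estimate at $d=3^n c$, so that $d^2=3^{2n}c^2$, and invoking (2.17) yields
$$\|h(c^2)-h(c)^2\|_B \le \lim_{n\to\infty}3^{-2n}\varphi(0,0,3^n c)=0,$$
which is precisely why hypothesis (2.17) is stated with the weight $3^{-2n}$. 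Combining $\mathbb{C}$-linearity, the $*$-property, and this identity shows that $h$ is the desired Jordan $*$-homomorphism.
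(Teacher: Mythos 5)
Your proposal is correct and follows essentially the same route as the paper: the paper's proof of this theorem constructs $h(a)=\lim_n 3^{-n}f(3^n a)$ via the same specialization $\mu=1$, $b=2a$, $c=0$ and the same induction/Cauchy argument, and then simply declares the remaining steps ``similar to the proof of Theorem 2.4,'' which is exactly the adaptation you carry out explicitly (including the correct use of the $3^{-2n}$ hypothesis for the Jordan identity $h(c^2)=h(c)^2$).
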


\begin{proof}
Letting $\mu=1,$~ $b=2a$ and $c=0$ in $(2.8),$ we get
$$\|3f(\frac{a}{3})-f(a)\|_B \leq \varphi(a,2a,0) \eqno(2.19)$$ for all $a \in
A.$ Replacing $a$ by $3a$ in $(2.19),$ we get
$$\|3^{-1}f(3a)-f(a)\|_B \leq 3^{-1}\varphi(3a,2(3a),0)$$ for all
$a \in A.$ On can apply the induction method to prove that
$$\|3^{-n}f(3^na)-f(a)\|_B \leq
\sum_{i=1}^{n}3^{-i}\varphi(3^ia,2(3^ia),0)  \eqno(2.20)$$ for all
$a \in A.$
 In order to show the functions $h_n(a)=3^{-n}
f(3^na)$ form a convergent sequence, we use the Cauchy convergence
criterion. Indeed, replace $a$ by $3^ma $ and multiply by $3^{-m}$
in $(2.20),$ where $m$ is an arbitrary positive integer. We find
that
$$\|3^{-(m+n)}f(3^{m+n}a)-3^{-m} f(3^ma)\|\leq
\sum_{i=m+1}^{m+n}3^{-i} \varphi(3^ia,2(3^ia),0) \eqno (2.21)$$ for
all positive integers. Hence by the Cauchy criterion the limit
$h(a)=\lim_{n \to \infty}h_n(a)$ exists for each $a \in A.$ By
taking the limit as $n \to \infty$ in $(2.20)$ we see that
$$\|h(a)-f(a)\| \leq \sum_{i=1}^{\infty}3^{-i}
\varphi(3^ia,2(3^ia),0)$$ and $(2.18)$ holds for all $a \in A.$

The rest of the proof is similar to the proof of Theorem 2.4.
\end{proof}

\begin{cor}
Suppose that $f:A \to B$ is a mapping with $f(0)=0$ for which
there exists constant $\theta\geq0$ and $p_1,p_2,p_3<1$ such that
$$\|f(\frac{\mu b-a}{3})+f(\frac{a-3c}{3})+ \mu f(\frac{3a-b}{3}+c)-f(a)+f(c^2)-f(c)^2\|_B \leq \theta (\|a\|^{p_1}+\|b\|^{p_2}+\|c\|^{p_3}),$$
$$\|f(a^*)-f(a)^*\|_B \leq \theta (\|a\|^{p_1}+\|a\|^{p_2}+\|a\|^{p_3})$$
for all $a,b,c \in A$ and all $ \mu \in \Bbb T.$ Then there exists
a unique Jordan $*$-homomorphism $h:A \to B$ such that
$$\|f(a)-h(a)\|_B \leq \frac{\theta \|a\|^{p_1}}{3^{(1-p_1)}-1}+\frac{\theta 2^{p_2}\|a\|^{p_2}}{3^{(1-p_2)}-1}$$
for all $a \in A.$
\end{cor}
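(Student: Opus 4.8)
The plan is to apply Theorem 2.7 with the explicit choice $\varphi(a,b,c):=\theta(\|a\|^{p_1}+\|b\|^{p_2}+\|c\|^{p_3})$, since the two functional inequalities hypothesized in the corollary are exactly $(2.7)$ and $(2.8)$ for this particular $\varphi$. The only thing to verify before quoting the theorem is that this $\varphi$ satisfies the convergence conditions $(2.16)$ and $(2.17)$; everything else then follows from Theorem 2.7 verbatim.

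For $(2.16)$, substituting gives
$$3^{-i}\varphi(3^ia,3^ib,3^ic)=\theta\bigl(3^{i(p_1-1)}\|a\|^{p_1}+3^{i(p_2-1)}\|b\|^{p_2}+3^{i(p_3-1)}\|c\|^{p_3}\bigr),$$
which is a sum of three geometric series with ratios $3^{p_j-1}$; since each $p_j<1$ we have $3^{p_j-1}<1$, so the series converges. For $(2.17)$, the same substitution produces a factor $3^{n(p_j-2)}$ in each term, and $p_j-2<0$ forces the limit to be zero. Hence Theorem 2.7 applies and yields a unique Jordan $*$-homomorphism $h:A\to B$ satisfying $(2.18)$.

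It then remains to evaluate the right-hand side of $(2.18)$ for this $\varphi$. Because the third slot is $0$, the $p_3$-term drops out ($\|0\|^{p_3}=0$), and
$$3^{-i}\varphi(3^ia,3^i2a,0)=\theta\|a\|^{p_1}3^{i(p_1-1)}+\theta 2^{p_2}\|a\|^{p_2}3^{i(p_2-1)}.$$
Summing the two geometric series from $i=1$ and using $\sum_{i=1}^{\infty}(3^{p_j-1})^i=\dfrac{3^{p_j-1}}{1-3^{p_j-1}}=\dfrac{1}{3^{1-p_j}-1}$ gives precisely the claimed bound $\dfrac{\theta\|a\|^{p_1}}{3^{1-p_1}-1}+\dfrac{\theta 2^{p_2}\|a\|^{p_2}}{3^{1-p_2}-1}$.

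There is no genuine obstacle here; the argument is a direct specialization of Theorem 2.7. The only points demanding care are bookkeeping ones: that the summation index begins at $i=1$ (so the closed form carries the $-1$ in the denominator, rather than the $1-3^{1-p}$ that would appear for a sum starting at $i=0$), and the observation that $p_3$ enters only through the convergence requirements $(2.16)$–$(2.17)$ and plays no role whatsoever in the final estimate.
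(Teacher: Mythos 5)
Your proposal is correct and is exactly the paper's approach: the paper's entire proof is the one-line specialization $\varphi(a,b,c):=\theta(\|a\|^{p_1}+\|b\|^{p_2}+\|c\|^{p_3})$ in Theorem 2.7, and you have merely filled in the verification of $(2.16)$--$(2.17)$ and the geometric-series evaluation of $(2.18)$ that the authors leave implicit. The bookkeeping (sum starting at $i=1$, the factor $2^{p_2}$, the vanishing $p_3$-term) all checks out against the stated bound.
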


\begin{proof}
Letting $\varphi(a,b,c):=\theta
(\|a\|^{p_1}+\|b\|^{p_2}+\|c\|^{p_3})$ in Theorem 2.7, we obtain
the result.
\end{proof}

The following corollary is the Isac-Rassias stability.

\begin{cor}
Let $\psi: \Bbb R^+\cup\{0\} \to \Bbb R^+\cup\{0\}$ be a function
with $\psi(0)=0$ such that
$$\lim _{t \to 0}\frac{\psi(t)}{t}=0,$$
$$\psi(st)\leq \psi(s)\psi(t)~~~~~~~~~~s,t \in \Bbb R^+,$$
$$3^{-1}\psi(3)<1.$$\\
Suppose that $f:A \to B$ is a mapping with $f(0)=0$  satisfying
$(2.7)$ and $(2.8)$ such that
$$\|f(\frac{\mu b-a}{3})+f(\frac{a-3c}{3})+ \mu f(\frac{3a-b}{3}+c)-f(a)+f(c^2)-f(c)^2\|_B \leq
\theta [\psi(\|a\|)+\psi(\|b\|)+\psi(\|c\|)]$$ for all $a,b,c \in
A$ where $\theta>0$ is a constant. Then  there exists a unique
Jordan $*$-homomorphism $h:A \to B$ such that
$$\|h(a)-f(a)\|_B \leq \frac{\theta(1+\psi(2))\psi(\|a\|)}{1-3^{-1}\psi(3)}$$ for all $a \in A.$
\end{cor}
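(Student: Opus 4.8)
The plan is to apply Theorem 2.7 directly, taking $\varphi(a,b,c) := \theta[\psi(\|a\|) + \psi(\|b\|) + \psi(\|c\|)]$, and then to read off the explicit constant by summing a geometric series. First I would verify that this $\varphi$ satisfies the two structural hypotheses (2.16) and (2.17). The essential tool is submultiplicativity $\psi(st) \leq \psi(s)\psi(t)$: iterating it yields $\psi(3^{i}) \leq \psi(3)^{i}$ and hence $\psi(3^{i}s) \leq \psi(3)^{i}\psi(s)$ for every $s \geq 0$. Consequently $3^{-i}\varphi(3^{i}a, 3^{i}b, 3^{i}c) \leq \theta(3^{-1}\psi(3))^{i}[\psi(\|a\|)+\psi(\|b\|)+\psi(\|c\|)]$. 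Since the hypothesis $3^{-1}\psi(3) < 1$ makes the right-hand side the general term of a convergent geometric series, (2.16) holds. For (2.17) the same estimate gives $3^{-2n}\varphi(3^{n}a,3^{n}b,3^{n}c) \leq \theta(3^{-2}\psi(3))^{n}[\psi(\|a\|)+\psi(\|b\|)+\psi(\|c\|)]$, and $3^{-2}\psi(3) = 3^{-1}\cdot(3^{-1}\psi(3)) < 1$ forces the limit to be $0$.

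Having checked the hypotheses, Theorem 2.7 produces a unique Jordan $*$-homomorphism $h\colon A\to B$ together with the estimate (2.18), namely $\|h(a)-f(a)\|_{B} \leq \sum_{i=1}^{\infty} 3^{-i}\varphi(3^{i}a,\, 2\cdot 3^{i}a,\, 0)$. The final step is to evaluate this series. Using $\psi(0)=0$ to kill the third argument, together with $\psi(2\cdot 3^{i}\|a\|) \leq \psi(2)\psi(3)^{i}\psi(\|a\|)$ and $\psi(3^{i}\|a\|) \leq \psi(3)^{i}\psi(\|a\|)$, each summand is bounded by $\theta(1+\psi(2))\psi(\|a\|)(3^{-1}\psi(3))^{i}$. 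Summing the geometric series then delivers the claimed bound $\frac{\theta(1+\psi(2))\psi(\|a\|)}{1-3^{-1}\psi(3)}$.

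I do not expect a serious obstacle, since the corollary is a pure specialization of Theorem 2.7 and the only real work is bookkeeping with the submultiplicative estimate. The one point to watch is that the geometric sum $\sum_{i=1}^{\infty}(3^{-1}\psi(3))^{i}$ begins at $i=1$ and equals $\frac{3^{-1}\psi(3)}{1-3^{-1}\psi(3)}$, which is dominated by $\frac{1}{1-3^{-1}\psi(3)}$ because $3^{-1}\psi(3) < 1$; thus the stated estimate is a valid (if slightly non-tight) upper bound. The normalization $f(0)=0$ and the hypotheses (2.7) and (2.8) are inherited verbatim, so the $*$-preserving and Jordan-multiplicative properties require no separate argument, being already packaged into the conclusion of Theorem 2.7. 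The auxiliary condition $\lim_{t\to 0}\psi(t)/t = 0$ plays no active role in this upward-scaling variant and is carried along only for symmetry with the companion statement.
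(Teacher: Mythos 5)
Your proposal is correct and follows exactly the route the paper takes: the paper's proof is the one-line specialization $\varphi(a,b,c):=\theta[\psi(\|a\|)+\psi(\|b\|)+\psi(\|c\|)]$ in Theorem 2.7, and you have simply supplied the routine verification of (2.16)--(2.17) via submultiplicativity and the geometric-series evaluation of (2.18) that the authors leave implicit. Your observation that the sum starting at $i=1$ actually yields the slightly sharper constant $\frac{3^{-1}\psi(3)}{1-3^{-1}\psi(3)}$ is accurate and does not conflict with the stated (weaker) bound.
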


\begin{proof}
Letting $\varphi(a,b,c):=\theta
[\psi(\|a\|)+\psi(\|b\|)+\psi(\|c\|)]$ in Theorem 2.7, we obtain
the result.
\end{proof}

One can get easily the stability of
Hyers-Ulam  by the following corollary.

 \begin{cor}
 Suppose that $f:A \to B$ is a mapping
with $f(0)=0$ for which there exists constant $\theta\geq0$ such
that
$$\|f(\frac{\mu b-a}{3})+f(\frac{a-3c}{3})+ \mu f(\frac{3a-b}{3}+c)-f(a)+f(c^2)-f(c)^2\|_B \leq \theta ,$$
$$\|f(a^*)-f(a)^*\|_B \leq \theta $$
for all $a,b,c \in A$ and all $ \mu \in \Bbb T.$ Then there exists
a unique Jordan $*$-homomorphism $h:A \to B$ such that
$$\|f(a)-h(a)\|_B \leq \theta $$
for all $a \in A.$
\end{cor}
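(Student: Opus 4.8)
The plan is to derive this from Theorem 2.7 by specializing the control function to a constant. Concretely, I would set $\varphi(a,b,c):=\theta$ for all $a,b,c\in A$ and check that this $\varphi$ meets every hypothesis of Theorem 2.7. With this choice, inequality $(2.7)$ of that theorem reads $\|f(a^*)-f(a)^*\|_B\leq\varphi(a,a,a)=\theta$, which is exactly the second displayed inequality of the corollary, and inequality $(2.8)$ reads $\|f(\frac{\mu b-a}{3})+f(\frac{a-3c}{3})+\mu f(\frac{3a-b}{3}+c)-f(a)+f(c^2)-f(c)^2\|_B\leq\varphi(a,b,c)=\theta$, which is the first displayed inequality. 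Thus the two stability hypotheses transfer verbatim, and the mapping $f$ already satisfies $f(0)=0$ as required.

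Next I would verify the two summability and limit conditions. Condition $(2.16)$ becomes $\sum_{i=1}^{\infty}3^{-i}\theta=\theta\sum_{i=1}^{\infty}3^{-i}=\frac{\theta}{2}<\infty$, a convergent geometric series, and condition $(2.17)$ becomes $\lim_{n\to\infty}3^{-2n}\theta=0$; both hold trivially. Consequently Theorem 2.7 applies and produces a unique Jordan $*$-homomorphism $h:A\to B$ together with estimate $(2.18)$, namely $\|h(a)-f(a)\|_B\leq\sum_{i=1}^{\infty}3^{-i}\varphi(3^i a,3^i2a,0)$. Evaluating the right-hand side at the constant $\varphi$ gives $\sum_{i=1}^{\infty}3^{-i}\theta=\frac{\theta}{2}$, and since $\theta\geq0$ we have $\frac{\theta}{2}\leq\theta$, so the claimed bound $\|f(a)-h(a)\|_B\leq\theta$ follows for all $a\in A$.

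The one point that requires genuine care --- and the reason the argument is not a mechanical copy of the $p>1$ corollaries --- is the choice between the two stability theorems. Theorem 2.4 cannot be used here: its hypothesis $(2.5)$ demands that $\sum_{i=0}^{\infty}3^i\varphi(\frac{a}{3^i},\frac{b}{3^i},\frac{c}{3^i})$ converge, but for a constant $\varphi\equiv\theta>0$ this series is $\sum_{i=0}^{\infty}3^i\theta$, which diverges. The decaying factor $3^{-i}$ in Theorem 2.7's condition $(2.16)$ is precisely what rescues the constant case, so the \emph{expanding} approximation $h_n(a)=3^{-n}f(3^na)$ of Theorem 2.7 is the correct one to invoke. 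Once that selection is made, everything else is the routine specialization already carried out in Corollaries 2.8 and 2.9, which likewise invoke Theorem 2.7.
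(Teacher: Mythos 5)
Your proposal is correct and follows essentially the same route as the paper: the paper proves this by setting $p_1=p_2=p_3=0$ in Corollary 2.8, which is itself just Theorem 2.7 applied to a particular control function, so both arguments amount to specializing Theorem 2.7 to a constant $\varphi$. Your direct choice $\varphi\equiv\theta$ is marginally cleaner (it avoids the $\|a\|^{0}$ convention at $a=0$ and actually yields the sharper bound $\theta/2$), and your remark on why Theorem 2.4 is unusable here is accurate.
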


\begin{proof}
Letting $p_1=p_2=p_3=0$ in Corollary 2.8, we obtain the result.
\end{proof}

{\small
%----------------------------------------------------------------------%

}
\end{document}